\newtheorem{theorem}{Theorem}[section]
\newtheorem{proposition}[theorem]{Proposition}
\newcommand{\beqa}{\begin{eqnarray*}}
\newcommand{\eeqa}{\end{eqnarray*}\par\noindent}
\newcommand{\id}{\mathsf{id}}
\renewcommand{\emph}[1]{\textbf{#1}}
\newcommand{\PP}{\mathbf{P}}
\newcommand{\lrarr}{\longrightarrow}
\newcommand{\rarr}{\rightarrow}
\newcommand{\ie}{\textit{i.e.}~}
\newcommand{\CC}{\mathcal{C}}
\newcommand{\Set}{\mathbf{Set}}
\newcommand{\Two}{\mathbf{2}}
\newcommand{\IFF}{\; \Longleftrightarrow \;}
\newcommand{\AND}{\; \wedge \;}
\newcommand{\OR}{\; \vee \;}
\newcommand{\IMP}{\; \Rightarrow \;}
\newcommand{\card}[1]{|#1|}
\newcommand{\rinj}{\rmon[heads=vee]}
\newcommand{\op}{\mathsf{op}}
\newcommand{\natarrow}{\rTo^{.}}
\newcommand{\UU}{\mathcal{U}}
\newcommand{\Pref}{\mathbb{P}}
\newcommand{\sw}{\sigma}
\newcommand{\swl}{\sigma^{L}}
\newcommand{\Cinc}{\CC_{\mathsf{inc}}}
\newcommand{\Pinc}{\Pref_{\mathsf{inc}}}
\newcommand{\Ind}{\mathcal{I}}
\newcommand{\IIA}{\mbox{\textbf{\textrm{IIA}}}}
\newcommand{\Pareto}{\mbox{\textbf{\textrm{P}}}}
\newcommand{\CP}{\mbox{\textbf{\textrm{CP}}}}
\newcommand{\vsa}{\vspace{0.1in}}
\newcommand{\UD}{\mbox{\textbf{\textrm{UD}}}}
\newcommand{\CUD}{\mbox{\textbf{\textrm{CUD}}}}
\newcommand{\WP}{\mbox{\textbf{\textrm{WP}}}}
\newcommand{\Dict}{\mbox{\textbf{\textrm{D}}}}
\newcommand{\Dinc}{\mathcal{D}_{\mathsf{inc}}}
\newcommand{\Dom}{\mathcal{D}}
\newcommand{\DL}{\mathcal{D}^{L}}
\newcommand{\DLinc}{\mathcal{D}^{L}_{\mathsf{inc}}}
\newcommand{\Dinj}{\mathcal{D}}
\newcommand{\pstr}{p^{>}}
\newcommand{\str}[1]{#1^{>}}
\newcommand{\sigstr}{\str{\sigma(p)}}
\newcommand{\UC}{U^{\mathsf{c}}}
\newcommand{\Lin}{\mathbb{L}}
\newcommand{\Lininc}{\mathbb{L}_{\mathsf{inc}}}
\newcommand{\LL}{\mathcal{L}}
\newcommand{\PI}{\Pinc^{\Ind}}
\newcommand{\AU}{\mathbb{A}}
\newcommand{\TI}{\Two^{\Ind}}
\title{Arrow's Theorem by Arrow Theory}
\author{Samson Abramsky\\
Department of Computer Science, University of Oxford}
\date{}
\begin{document}

\maketitle

\begin{abstract}
We give a categorical account of Arrow's theorem, a seminal result in social choice theory.
\end{abstract}

\section{Introduction}

Arrow's theorem \cite{arrow1950difficulty,arrow1970social} is a seminal result from 1950 which founded the modern field of \emph{social choice theory}.
It is a no-go theorem for combining individual preferences: under two very plausible assumptions, the only possible `social welfare function' is a dictatorship!
Closely allied results, notably the Gibbard-Satterthwaite theorem \cite{gibbard1973manipulation,satterthwaite1975strategy}, relate to \emph{voting systems}: when there are at least three alternatives, the only non-manipulable voting systems are dictatorships.

Our aim in this note is  to cast this result at a more abstract, categorical level, to  expose common structure with no-go theorems in other fields, such as the foundations of quantum mechanics.
This is motivated by our recent work with Adam Brandeburger on a 
sheaf-theoretic approach to quantum non-locality and contextuality \cite{abramsky2011sheaf},
and also by the work of Jouko V\"a\"an\"anen and others on Dependence/Independence Logic \cite{vaananen2007dependence,gradeldependence}.

We shall assume only the most basic notions of category theory: categories, functors, and natural transformations. Moreover, these will be used in a simple, concrete setting. Thus the categorical language should not be an impediment to readers interested in a structural account of Arrow's theorem.

\subsection{What Arrow's Theorem Says}

Let $A$ be a set of \emph{alternatives}, and $\Ind$ a set of \emph{individuals}.

$\Pref(A)$ is the set of \emph{preference relations} on $A$. These are usually taken to be weak orders (transitive and connected relations), sometimes linear orders (transitive, connected and antisymmetric).

$\Pref(A)^{\Ind}$ is the set of \emph{profiles} or \emph{ballots}, which assign a preference relation on the alternatives to each individual --- a ``vote''.

A \emph{social welfare function} is a map
\[ \sw : \Pref(A)^{\Ind} \lrarr \Pref(A) . \]
Such a map produces a single ranking on alternatives --- a social choice --- from a profile.

\vsa
Two conditions are standardly considered on such functions:
\begin{itemize}
\item \textbf{Independence of Irrelevant Alternatives} ($\IIA$). The social decision on the relative preference between two alternatives $a$, $b$ depends only on the individual preferences between these alternatives. It is \emph{independent} of their rankings with respect to other alternatives.

\item \textbf{The Pareto or Uniformity Principle} ($\Pareto$). If every individual prefers $a$ to $b$, then so should the social welfare function.
\end{itemize}

We can now state Arrow's Theorem more formally as follows.

\begin{theorem}[Arrow's Theorem]
If $\card{A} > 2$ and $\Ind$ is finite, then any social welfare function satisfying $\IIA$ and $\Pareto$ is a \emph{dictatorship}: \ie for some individual $i \in \Ind$, for all profiles $p \in \Pref(A)^{\Ind}$ and alternatives $a, b \in A$:
\[  a \, \sw(p) \, b \IFF   a \, p_i \, b . \]
\end{theorem}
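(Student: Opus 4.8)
The plan is to reduce the theorem to the combinatorial structure of \emph{decisive coalitions} and then to recognise that structure as an ultrafilter. Call a subset $S \subseteq \Ind$ \emph{decisive for} an ordered pair $(a, b)$ if, in every profile $p$ in which all $i \in S$ rank $a$ above $b$, one has $a \, \sw(p) \, b$, whatever the individuals outside $S$ may prefer; call $S$ \emph{decisive} if it is decisive for every ordered pair; and call $S$ \emph{weakly decisive for} $(a,b)$ if the weaker hypothesis that everyone in $S$ ranks $a$ above $b$ \emph{and} everyone outside $S$ ranks $b$ above $a$ already forces $a \, \sw(p) \, b$. The principle $\Pareto$ says precisely that $\Ind$ is decisive, and a dictator is exactly a decisive singleton, so the theorem reduces to producing one.

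The crux, and the step I expect to be the main obstacle, is the \emph{field expansion} (contagion) lemma: if $S$ is weakly decisive for a single pair, then $S$ is decisive for every pair. This is where the hypothesis $\card{A} > 2$ is indispensable. Given a third alternative $c$, I would exhibit, for each prescribed ranking of $a$ against $c$ outside $S$, a profile in which $S$ ranks $a > b > c$ while each outsider places $b$ at the top; weak decisiveness on $(a,b)$ gives $a \, \sw(p) \, b$, $\Pareto$ applied to $(b,c)$ gives $b \, \sw(p) \, c$, and transitivity of the social ranking yields $a \, \sw(p) \, c$. Since $\IIA$ makes the social verdict on $(a,c)$ depend only on the individual rankings of $a$ against $c$, and the prescribed outside ranking was arbitrary, this establishes that $S$ is decisive for $(a,c)$; permuting the roles of $a$, $b$, $c$ then propagates decisiveness to all pairs. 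Orchestrating $\IIA$, $\Pareto$, transitivity and the auxiliary alternative through a single explicit profile is the delicate part.

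With contagion in hand I would check that the family $\WW$ of decisive coalitions is an ultrafilter on $\Ind$. Upward closure is immediate from the definition. Properness holds because $\Ind \in \WW$ by $\Pareto$, while $\vn \notin \WW$, since a profile in which everyone prefers $b$ to $a$ forces $b \, \sw(p) \, a$. The dichotomy ``$S \in \WW$ or $\Ind \setminus S \in \WW$'' follows from connectedness: in a profile where $S$ unanimously prefers $a$ to $b$ and its complement the reverse, the social verdict makes one of the two coalitions weakly decisive for the corresponding pair, hence decisive by contagion. Closure under intersection is obtained by the same recipe --- three alternatives, one profile realising the desired unanimity pattern, then $\IIA$ and transitivity to read off which coalition is forced.

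Finally, since $\Ind$ is finite, every ultrafilter on it is principal: there is a unique $i \in \Ind$ with $\WW = \{ S \subseteq \Ind : i \in S \}$. This $i$ is decisive, so $a \, \sw(p) \, b$ holds whenever $a \, p_i \, b$; the converse follows by applying the same fact to $(b, a)$ together with connectedness, giving $a \, \sw(p) \, b \IFF a \, p_i \, b$ for all $p$, $a$, $b$. Thus $i$ is a dictator. Finiteness is used only at this last step, to exclude free ultrafilters.
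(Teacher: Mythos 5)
Your proposal takes the same route as the paper: coalitions decisive over pairs, a contagion/field-expansion lemma driven by a third alternative, the family of decisive coalitions shown to be an ultrafilter on $\Ind$, and principality of ultrafilters on a finite set to extract the dictator. This is precisely the structure of Section~2 of the paper (your single field-expansion lemma is there split into Propositions~\ref{arrprop}, \ref{relprop} and \ref{monprop} --- contagion, neutrality, monotonicity --- and your ultrafilter claim is Theorem~\ref{ultrafilthm}). Your field-expansion sketch itself is sound.

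The genuine gap is in your dichotomy step. You argue that in a profile $p$ where $S$ unanimously prefers $a$ to $b$ and $S^{\mathsf{c}}$ the reverse, ``the social verdict makes one of the two coalitions weakly decisive for the corresponding pair.'' But $\sw(p)$ is only a weak order: connectedness yields $a \, \sw(p) \, b$ or $b \, \sw(p) \, a$, whereas weak decisiveness requires a \emph{strict} social preference. If $\sw(p)$ is indifferent between $a$ and $b$ --- and nothing proved so far rules this out --- then neither coalition is shown weakly decisive, and $\IIA$ propagates only that indifference; the two-alternative argument stalls. This is exactly why the paper's proof of (F4) (from which your dichotomy (F7) follows) in Theorem~\ref{ultrafilthm} is a \emph{three}-alternative argument: it takes a profile over $\{a,b,c\}$ with pattern $bca$ on one coalition and $cab$ on the other, and in the case where the first coalition fails to obtain a strict verdict, so that $b \, \sw(p) \, c$ holds only weakly, it combines this weak preference with a strict preference $c \sigstr a$ obtained from Pareto/neutrality, via the absorption property ($aIb \AND bPc \AND cId \IMP aPd$), to manufacture the strict preference $b \sigstr a$ needed to make the other coalition weakly decisive. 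Your intersection-closure step gestures at ``the same recipe --- three alternatives''; the dichotomy step needs that recipe too, not just connectedness. A secondary overstatement: at the end you claim decisiveness of $i$ gives $a \, \sw(p) \, b$ whenever $a \, p_i \, b$; when $p_i$ may express indifference this direction fails, and what decisiveness actually yields is the strict implication $a \pstr_i b \IMP a \sigstr b$, i.e.\ the property $\Dict$, which is the form the paper's precise statement and proof in Section~2 establish.
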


Thus the social choice function, under these very plausible assumptions, simply copies the choices of one fixed individual --- the dictator.

An extraordinary number of different proofs, as well as innumerable variations, have appeared in the (huge) literature. For a small selection, see \cite{arrow1970social,blau1972direct,kirman2012arrow,taylor2005social}.

A closely related result is the \emph{Gibbard-Satterthwaite theorem} \cite{gibbard1973manipulation,satterthwaite1975strategy} on voting systems:

\begin{theorem}
If $\card{A} > 2$ and $\Ind$ is finite, then any voting system
\[ v : \Pref(A)^{\Ind} \lrarr A \]
which is \emph{non-manipulable} is a dictatorship.
\end{theorem}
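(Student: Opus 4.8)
The plan is to \emph{reduce} this statement to Arrow's Theorem. From a non-manipulable voting system $v : \Pref(A)^{\Ind} \lrarr A$ I would manufacture a social welfare function $\sw : \Pref(A)^{\Ind} \lrarr \Pref(A)$ satisfying $\IIA$ and $\Pareto$, apply Arrow's Theorem to obtain a dictator for $\sw$, and then show that this same individual dictates $v$. I work with linear orders, so that $v(p)$ names a unique top alternative, and I read non-manipulability as \emph{strategy-proofness}: no individual $i$ can secure a strictly $p_i$-preferred outcome by replacing the ballot $p_i$ by any other ranking. As is standard in this formulation, I also assume $v$ is surjective onto $A$.

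First I would extract two consequences of strategy-proofness. \textbf{Monotonicity}: if $v(p) = a$ and $p'$ is obtained from $p$ only by raising $a$ in some individuals' rankings, leaving every other pairwise comparison untouched, then $v(p') = a$; this is proved by altering one ballot at a time and invoking strategy-proofness from both directions at each step. \textbf{Unanimity}: if every individual ranks $a$ first in $p_i$, then $v(p) = a$, which follows from monotonicity together with surjectivity. With these in hand, I define $\sw$ pairwise. Given a profile $p$ and distinct alternatives $a, b$, let $p^{\{a,b\}}$ be the profile in which every individual lifts $a$ and $b$ to the top two positions, preserving their relative order as in $p_i$. Unanimity applied to this top pair gives $v(p^{\{a,b\}}) \in \{a, b\}$, and I set
\[ a \, \sw(p) \, b \IFF v(p^{\{a,b\}}) = a . \]
By construction this comparison depends only on each individual's ranking of $a$ against $b$, so $\IIA$ holds, and $\Pareto$ is immediate from unanimity.

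The main obstacle will be \textbf{transitivity}: verifying that $\sw(p)$ is a genuine preference relation, i.e. that $a \, \sw(p) \, b$ and $b \, \sw(p) \, c$ imply $a \, \sw(p) \, c$. This is exactly where strategy-proofness must be deployed with care: I would compare the outputs of $v$ on the profiles obtained by placing $a$, $b$, $c$ in the top three positions in the various admissible orders, and rule out any intransitive cycle by exhibiting a profitable misreport for some individual. This is the technical heart of the argument and the step most in need of a clean manipulation lemma.

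Once $\sw$ is established as a social welfare function satisfying $\IIA$ and $\Pareto$, Arrow's Theorem supplies an individual $i \in \Ind$ with $\sw(p) = p_i$ for every profile $p$. To conclude that $v$ is a dictatorship with this same dictator, I would prove the complementary lemma that $v(p)$ is always the $\sw(p)$-maximal alternative --- again a monotonicity and unanimity argument, starting from the winner $v(p)$. Combining the two facts, $v(p)$ is the $p_i$-maximal alternative, that is, individual $i$'s favourite, for every profile $p$, which is precisely the assertion that $v$ is a dictatorship.
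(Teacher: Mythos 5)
The first thing to say is that the paper itself offers no proof of this theorem: it is stated as a closely related result, with citations to Gibbard and Satterthwaite, and all of the paper's machinery (decisive sets, the ultrafilter theorem, the categorical factorization) is developed only for social welfare functions $\Pref(A)^{\Ind} \lrarr \Pref(A)$, never for voting systems $v : \Pref(A)^{\Ind} \lrarr A$. So your proposal can only be judged against the standard literature. Your overall route --- extract monotonicity and unanimity from strategy-proofness, manufacture a social welfare function by promoting each pair $\{a,b\}$ to the top of every ballot, apply Arrow's theorem, then transfer the dictator back to $v$ --- is the classical reduction of Gibbard--Satterthwaite to Arrow, and it is sound in outline. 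You are also right to add surjectivity explicitly: without some such non-imposition hypothesis the statement is false (a constant voting system is non-manipulable and is no dictatorship), so the paper's informal statement tacitly needs it.

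As a proof, however, the proposal has genuine gaps. The decisive one is that transitivity of $\sw(p)$ is left entirely open; this is not a routine verification but the technical core of the whole reduction, and until it is supplied Arrow's theorem cannot be invoked at all. Second, the claim that $\IIA$ holds ``by construction'' is too quick. Two profiles $p$ and $q$ that agree on every individual's ranking of $a$ versus $b$ may differ arbitrarily on the remaining alternatives, so $p^{\{a,b\}}$ and $q^{\{a,b\}}$ differ below the top pair; the monotonicity lemma you state --- raising $a$ while ``leaving every other pairwise comparison untouched'' --- does not license this move. What is actually needed is the strong form of monotonicity: if $v(p)=a$ and $p'$ is any profile in which, for each individual $i$, every alternative ranked below $a$ in $p_i$ is still ranked below $a$ in $p'_i$ (with no constraint whatsoever on comparisons not involving $a$), then $v(p')=a$. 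That stronger lemma, proved ballot-by-ballot from strategy-proofness, is precisely what makes both the $\IIA$ step and the transitivity step go through, and it is also what you need for the final lemma identifying $v(p)$ with the $\sw(p)$-maximum. A smaller point: you silently replace the paper's domain $\Pref(A)^{\Ind}$ of weak orders by linear orders; that is the standard setting for Gibbard--Satterthwaite, but it does change the statement, and extending to ballots with indifferences requires additional argument. In short: right strategy, but the lemma that powers it is missing, and the step you defer is the theorem's actual content.
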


The following quotation from the recent text \cite{taylor2005social} nicely captures the significance of the result:
\begin{quote}
``For an area of study to become a recognized field, or even a recognized subfield, two things are required: It must be seen to have coherence, and it must be seen to have depth. The former often comes gradually, but the latter can arise in a single flash of brilliance.
\ldots With social choice theory, there is little doubt as to the seminal result that made it a recognized field of study: Arrow's impossibility theorem.''
\end{quote}

The further contents of the paper are as follows. In Section~2, we shall present a fairly standard account of Arrow's theorem which will fix notation and serve as a reference point. In Section~3, we will reformulate  Arrow's Theorem in categorical terms, and in Section~4, we shall give a development of the proof which uses the categorical formulation to emphasize the structural aspects. Section~5 concludes.

\section{A `standard' account of Arrow's theorem}
\label{concsec}

The aim of this section is to give a clear, explicit presentation of a fairly standard account of Arrow's theorem and some related notions.

The arguments in Section~2.1 follow \cite{arrow1970social}, with some clarifications and refinements due to \cite{blau1972direct}. In Section~2.2, we follow \cite{kirman2012arrow}.

\subsection{Preference Relations}

We consider a number of properties of binary relations $R \subseteq A^2$ on a set $A$.
These are all universally quantified over elements $a, b, c \in A$:
\begin{description}
\item[Reflexivity]  $aRa$
\item[Irreflexivity] $\neg aRa$
\item[Symmetry] $aRb \IMP bRa$
\item[Antisymmetry] $aRb \AND bRa \IMP a=b$
\item[Transitivity] $aRb \AND bRc \IMP aRc$
\item[Connectedness] $aRb \OR bRa$.
\end{description}

A \emph{weak preference relation} is a transitive connected relation. We write $\Pref(A)$ for the set of all weak preference relations on $A$. Given a weak preference relation $R$, we can define two other relations:
\begin{description}
\item[Strict Preference]  $a P b := aRb \AND \neg bRa$.
\item[Indifference] $aIb := aRb \AND bRa$.
\end{description}
Then $P$ is a strict ordering (transitive and irreflexive), while $I$ is an equivalence relation (reflexive, symmetric and transitive).
These relations satisfy the following properties:
\begin{description}
\item[Trichotomy] $aPb \OR bPa \OR aIb$.
\item[Absorption] $aIb \AND bPc \AND cId \IMP aPd$.
\end{description}

A weak preference relation is \emph{linear} if it additionally satisfies antisymmetry. We write $\Lin(A)$ for the set of linear preference relations on $A$. If $R$ is linear, then the associated indifference relation $I$ is just the identity relation, while $P$ is a strict linear order.

Given $A \subseteq B$, we can define a restriction map $\Pref(B) \rarr \Pref(A) :: R \mapsto R | A$, where $R | A := R \cap A^2$. Note that the truth of any property of $R$ expressed by a universal sentence is preserved under restriction, so this is well-defined; moreover, the same operation also defines a map $\Lin(B) \rarr \Lin(A)$.

\subsection{Social choice situations}

We shall define a class of structures which provide the setting for Arrow's theorem.

A \emph{social choice situation} is a structure $(A, \Ind, \Dom, \sigma)$ where:
\begin{itemize}
\item $A$ is a set of \emph{alternatives}.
\item $\Ind$ is a set of \emph{individuals} or \emph{agents}.
\item $\Dom \, \subseteq \, \Pref(A)^{\Ind}$ is the set of allowed \emph{ballots} or \emph{profiles} of individual preferences.
\item $\sigma : \Dom \rarr \Pref(A)$ is the \emph{social choice function}.
\end{itemize}

We write $p_i$ for the weak preference relation of the individual $i$ in a profile $p$. We write $\pstr_i$ for the strict preference relation associated with $p_i$. Similarly, we write $\sigstr$ for the strict preference relation associated with $\sigma(p)$.
We extend restriction to profiles pointwise: $(p | A)_i := p_i | A$. 

We shall now define a number of properties of social choice situations.
\begin{description}
\item[$\UD$] Unrestricted domain:
\[ \forall a, b, c \in A. \, \forall p \in \Pref(\{ a, b, c \})^{\Ind} . \, \exists q \in \Dom. \, q | \{ a, b, c \} = p . \]
\item[$\Pareto$] Pareto:
\[ \forall a, b \in A. \, \forall p \in D. \, (\forall i \in \Ind . \, a \pstr_i b ) \IMP a \sigstr b . \]
\item[$\WP$] Weak Pareto:
\[ \forall a, b \in A. \, \forall p \in D. \, (\forall i \in \Ind . \, a \pstr_i b ) \IMP a \sigma(p) b . \]
\item[$\IIA$] Independence of irrelevant alternatives:
\[ \forall a, b \in A. \, \forall p, q \in \Dom . \, p | \{ a, b \} = q | \{ a, b \} \IMP \sigma(p)  | \{ a, b \} = \sigma(q)  | \{ a, b \} . \]
\item[$\Dict$] Dictator:
\[ \exists i \in \Ind . \, \forall a, b \in A. \, \forall p \in \Dom. \, a \pstr_i b \IMP a \str{\sigma(p)} b . \]
\end{description}

We can now state Arrow's theorem.

\begin{theorem}[Arrow]
Let $(A, \Ind, \Dom, \sigma)$ be a social choice situation with $\card{A} \geq 3$, satisfying $\UD$, $\IIA$ and $\Pareto$. Then if  $\Ind$ is finite it also satisfies $\Dict$, \ie there is a dictator.
\end{theorem}

\subsection{Proof of Arrow's Theorem}

In this section we shall fix a social choice situation $(A, \Ind, \Dom, \sigma)$ satisfying the following conditions: $\card{A} \geq 3$, $\UD$, and $\IIA$.

\subsubsection{Decisiveness, Neutrality and Monotonicity}

In this subsection, we shall assume that our social choice situation satisfies the weak Pareto principle $\WP$.

Given $U \subseteq \Ind$ and $p \in \Dom$, we introduce the notation $a \str{p_U} b := \forall i \in U. \, a \pstr_i b$.
Given a set $U \subseteq \Ind$ and distinct elements $a, b \in A$, we define 
\[ U_{ab} \; := \; \{ p \in \Dom \mid  a \str{p_U} b \AND b \str{p_{U^{c}}} a  \} . \]
We define a relation $D_U$ on $A$ by 
\[ a D_U b \; := \; a \neq b \AND \forall p \in U_{ab}. \,  a \sigstr b . \]
We read $a D_U b$ as ``$U$ is decisive for $a$ over $b$''.

\begin{proposition}
\label{arrprop}
For all $a, b, c \in A$:
\begin{enumerate}
\item If $c \neq a$, then $a D_U b \IMP a D_U c$.
\item If $c  \neq b$, then $a D_U b \IMP c D_U b$.
\end{enumerate}
\end{proposition}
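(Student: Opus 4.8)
The plan is to prove the two statements by the classical \emph{field-expansion} construction: build an auxiliary profile on the triple $\{a,b,c\}$ and combine the hypothesis $a D_U b$ with weak Pareto. The two halves are mirror images, so I describe part~1 in full and indicate the symmetric change for part~2. First note that if $c=b$ the conclusion of~(1) is exactly the hypothesis, and if $c=a$ the conclusion of~(2) is trivial; so, using $\card{A} \geq 3$, I may in each case assume $a$, $b$, $c$ pairwise distinct.

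For part~1, fix any $p \in U_{ac}$; I must show $a \sigstr c$. Using $\UD$, choose $q \in \Dom$ whose restriction to $\{a,b,c\}$ realises, for each $i \in U$, the order $a \pstr_i b \pstr_i c$, and for each $i \in U^{c}$ the order $b \pstr_i c \pstr_i a$. By inspection $q | \{a,b\}$ witnesses $q \in U_{ab}$ (every $i \in U$ ranks $a$ above $b$, every $i \in U^{c}$ ranks $b$ above $a$), so $a D_U b$ gives $a \str{\sigma(q)} b$. Moreover every individual ranks $b$ above $c$ in $q$, so $\WP$ gives the weak preference $b \, \sigma(q) \, c$.

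The step that does the real work is to upgrade these to a \emph{strict} social preference of $a$ over $c$. In any weak order, $x P y$ together with $y R z$ forces $x P z$: transitivity gives $x R z$, while $z R x$ would combine with $y R z$ to give $y R x$, contradicting $x P y$. Applying this to $a \str{\sigma(q)} b$ and $b\,\sigma(q)\,c$ yields $a \str{\sigma(q)} c$. It remains to transfer this to $p$. Every profile in $U_{ac}$ has the \emph{same} restriction to $\{a,c\}$, namely $a$ strictly above $c$ on $U$ and $c$ strictly above $a$ on $U^{c}$, and $q$ was built to share that restriction; hence $q | \{a,c\} = p | \{a,c\}$, so $\IIA$ gives $\sigma(q)|\{a,c\} = \sigma(p)|\{a,c\}$. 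Since strict preference on a pair depends only on the restriction to that pair, $a \sigstr c$ follows. As $p \in U_{ac}$ was arbitrary, $a D_U c$.

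Part~2 is the mirror image: for $p \in U_{cb}$ I build $q$ with order $c \pstr_i a \pstr_i b$ on $U$ and $b \pstr_i c \pstr_i a$ on $U^{c}$, so that $a D_U b$ gives $a \str{\sigma(q)} b$ while $\WP$ (everyone ranks $c$ above $a$) gives the weak preference $c\,\sigma(q)\,a$; the dual combination, $x R y$ with $y P z$ implying $x P z$, then yields $c \str{\sigma(q)} b$, and $\IIA$ transfers it to $p$. The only genuinely delicate points are the strict/weak transitivity lemma — needed precisely because $\WP$ delivers only a weak social preference whereas decisiveness is stated strictly — and the bookkeeping that the single profile $q$ agrees with \emph{every} member of the relevant decisiveness class on exactly the pair that $\IIA$ controls.
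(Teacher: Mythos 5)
Your proof is correct and follows essentially the same route as the paper's: the same auxiliary three-element profile (chains $abc$ on $U$ and $bca$ on $U^{c}$), decisiveness applied to the pair $(a,b)$, weak Pareto applied to $(b,c)$, and an $\IIA$ transfer to an arbitrary member of $U_{ac}$. The only cosmetic differences are that where the paper handles the strict-upgrade step by cases (transitivity of strict preference if $b \sigstr c$, absorption otherwise) you verify the composition lemma $xPy \wedge yRz \Rightarrow xPz$ directly from the definitions, and you write out part~2 explicitly, which the paper dismisses as ``similar''.
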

\begin{proof}
For (1), if $b=c$ there is nothing to prove. If $b \neq c$, we consider a profile $p \in \Dom$ such that for all $i \in U$, $p_i$ restricts to the strict chain $abc$, and for all $i \in \UC$, $p_i$ restricts to the strict chain $bca$. Such a profile exists by $\UD$. 
Note that $p \in U_{ab} \cap U_{ac} \cap \Ind_{bc}$. Since $a D_U b$, $a \sigstr b$, while by $\WP$, $b \sigma(p) c$. By transitivity if $b \sigstr c$, or by absorption otherwise, $a \sigstr c$. Now consider any profile $q$ such that $q \in U_{ac}$. Then $q | \{ a, c \} = p | \{a, c \}$, and by $\IIA$, $a \sigstr c \IMP a \sigma(q)^{>} c$; thus $a D_U c$. The argument for (2) is similar.
\end{proof}

As pointed out in \cite{blau1972direct}, the following purely relational argument allows us to conclude Neutrality from the previous proposition.

\begin{proposition}
\label{relprop}
Let $R$ be an irreflexive relation on a set $X$ with at least three elements, such that, for all $a, b, x \in X$:
\begin{enumerate}
\item If $x \neq a$, then $a R b \IMP a R x$.
\item If $x  \neq b$, then $a R b \IMP x R b$.
\end{enumerate}
If $x, y$ are any pair of distinct elements of $X$, then $a R b \IMP x R y$.
\end{proposition}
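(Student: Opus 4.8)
```latex
\textbf{Proof plan.}
The plan is to show that any two distinct elements $x, y$ can be reached from the pair $a, b$ by applying the two hypotheses in succession, using the irreflexivity of $R$ to guarantee that the intermediate steps keep all the relevant elements distinct. Starting from $a R b$, I would first try to change the first coordinate from $a$ to $x$, and then change the second coordinate from $b$ to $y$, always invoking hypothesis (1) to replace a second coordinate and hypothesis (2) to replace a first coordinate (or vice versa), subject to the side condition that the replacing element differs from the one being kept fixed.

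First I would handle the coordinate I can change freely. Suppose I want to pass from $a R b$ to $x R b$ using hypothesis (2); this is immediate provided $x \neq b$. Similarly, passing from $a R b$ to $a R y$ via hypothesis (1) is immediate provided $y \neq a$. The difficulty is that $x$ might equal $b$, or $y$ might equal $a$, so a single direct application may be blocked. To circumvent this, I would route through a third element: since $\card{X} \geq 3$, there is always some $z$ distinct from the two elements currently in play, and I can use $z$ as a temporary intermediate value, first replacing one coordinate by $z$ and then by the desired target.

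Concretely, I expect the argument to proceed in two stages. Stage one: starting from $a R b$, change $b$ to $y$. If $y \neq a$, hypothesis (1) gives $a R y$ directly. If $y = a$, I cannot apply hypothesis (1) with target $y$, so I instead first apply hypothesis (2) to replace $a$ by some $z \notin \{a, b\}$ (legal since $z \neq b$), obtaining $z R b$, and then I am free to adjust as needed. Stage two: from a relation of the form $u R y$, change the first coordinate $u$ to $x$ using hypothesis (2), which requires $x \neq y$ --- and this holds by assumption since $x \neq y$ are distinct. By chaining these replacements and always keeping a third element available as a buffer, I arrive at $x R y$.

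The main obstacle will be the bookkeeping of the side conditions $x \neq a$ and $x \neq b$ in the two hypotheses: I must verify at each step that the element I am substituting is genuinely distinct from the coordinate I am holding fixed, and that irreflexivity is not violated (we never need a reflexive instance since all pairs stay off the diagonal). The role of $\card{X} \geq 3$ is precisely to supply the buffer element $z$ that lets me sidestep the degenerate cases $x = b$ and $y = a$. Once the case analysis on whether $x, y$ coincide with $a, b$ is laid out, each individual implication is a direct instance of one of the two hypotheses, so the proof is a short finite chain of such instances.
```
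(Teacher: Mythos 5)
Your proposal is correct and takes essentially the same route as the paper: chaining the two substitution hypotheses, with a case split on the degenerate coincidences and a third element (guaranteed by $\card{X} \geq 3$) used as a buffer. The paper organizes this as three cases ($y \neq a$, then $x \neq b$, then $x = b \AND y = a$, reducing the last to proving $a R b \IMP b R a$ via a chain through $c$), while you split only on whether $y = a$; your chain $a R b \IMP z R b \IMP z R a \IMP x R a$ in the case $y = a$ is valid, since $z \notin \{a,b\}$ legitimizes the first two steps and $x \neq y = a$ legitimizes the last.
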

\begin{proof}
If $y \neq a$, then $a R b \IMP a R y \IMP x R y$.
If $x \neq b$, then $a R b \IMP x R b \IMP x R y$.
Otherwise, $x = b$ and $y = a$, and we must prove $a R b \IMP b R a$. In this case, since $X$ has at least three elements, we can find $c \in X$ with $a \neq c \neq b$. Then:
\[ a R b \IMP a R c \IMP b R c \IMP b R a . \]
\end{proof}

As an immediate consequence of Propositions~\ref{arrprop} and~\ref{relprop}, we obtain
\begin{theorem}[Local Neutrality]
\label{neutthm}
For all $a, b, x, y \in A$ with $x \neq y$:
\[ a D_U b \IMP x D_U y . \]
\end{theorem}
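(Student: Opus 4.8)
The plan is to obtain this statement by specialising the abstract Proposition~\ref{relprop} to the concrete decisiveness relation $D_U$ on the set of alternatives $A$. Proposition~\ref{relprop} is tailored precisely for this: from an irreflexive relation satisfying the two one-sided monotonicity conditions it deduces full neutrality, that is, that decisiveness for one ordered pair forces decisiveness for every ordered pair of distinct elements.

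First I would verify the three hypotheses of Proposition~\ref{relprop} with $R := D_U$ and $X := A$. Irreflexivity is immediate from the definition, since $a D_U b$ was stipulated to require $a \neq b$, so no element is $D_U$-related to itself. The cardinality hypothesis $\card{A} \geq 3$ is one of the standing assumptions fixed at the outset of this section. Finally, the two monotonicity conditions demanded by Proposition~\ref{relprop}---that $a D_U b \IMP a D_U x$ whenever $x \neq a$, and $a D_U b \IMP x D_U b$ whenever $x \neq b$---are exactly the two clauses of Proposition~\ref{arrprop}.

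With the hypotheses in place, Proposition~\ref{relprop} delivers $a D_U b \IMP x D_U y$ for every pair of distinct $x, y \in A$, which is precisely the asserted conclusion. There is no genuine obstacle here: all of the combinatorial content (built from $\UD$, $\WP$ and $\IIA$) lives in Proposition~\ref{arrprop}, while Proposition~\ref{relprop} isolates the purely order-theoretic closure argument. The one point to watch is that the ``irrelevant'' third element invoked in the degenerate case $x = b$, $y = a$ of Proposition~\ref{relprop} is genuinely available, which is exactly what $\card{A} \geq 3$ guarantees.
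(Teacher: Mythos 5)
Your proposal is correct and matches the paper's own proof exactly: the paper obtains Theorem~\ref{neutthm} as an immediate consequence of Propositions~\ref{arrprop} and~\ref{relprop}, which is precisely the specialisation $R := D_U$, $X := A$ that you carry out. Your verification of the hypotheses (irreflexivity from the $a \neq b$ clause in the definition of $D_U$, the cardinality assumption, and the two clauses of Proposition~\ref{arrprop}) is exactly the intended, if unstated, checking.
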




We now define a relation $E_U$ on $A$ by:
\[ a E_U b \; := \;  \forall p \in \Dom. \,  a \str{p_U} b \IMP a \sigma(p)^{>} b . \]
Thus we ask only that the individuals in $U$ strictly prefer $a$ to $b$; there is no constraint on those outside $U$.
Clearly, $a E_U b \IMP a D_U b$. The converse is an important property known as \emph{monotonicity}.

\begin{proposition}[Monotonicity]
\label{monprop}
For all $a, b \in A$, $a D_U b \IFF a E_U b$.
\end{proposition}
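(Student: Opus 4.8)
The forward implication $a E_U b \IMP a D_U b$ is immediate (as already noted), so the content is the converse, $a D_U b \IMP a E_U b$. The plan is to start from an arbitrary profile $p \in \Dom$ satisfying $a \str{p_U} b$ --- where the individuals outside $U$ may rank $a$ and $b$ in any way at all --- and to derive $a \, \sigstr \, b$. The strategy is to route the comparison through a third alternative $c$, which exists since $\card{A} \geq 3$, using \emph{decisiveness} for the one comparison on which $U$ must overrule its complement, and the \emph{weak Pareto} principle $\WP$ for a second comparison on which everyone agrees.

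First I would record, by Local Neutrality (Theorem~\ref{neutthm}, applied to the hypothesis $a D_U b$), that $U$ is decisive for every pair of distinct alternatives; in particular $a D_U c$. Next, using $\UD$ --- which permits an arbitrary weak order on the triple $\{a,b,c\}$ --- I would construct a profile $q \in \Dom$ whose restriction to $\{a,b,c\}$ is prescribed thus: every $i \in U$ ranks $a \, q^{>}_i \, c \, q^{>}_i \, b$ (the strict chain $a,c,b$), while every $i \in \UC$ places $c$ strictly at the top and keeps the relative ranking of $a$ and $b$ exactly as in $p_i$. By design $q | \{a,b\} = p | \{a,b\}$, and moreover $q \in U_{ac}$, since inside $U$ we have $a \, q^{>}_i \, c$ and outside $U$ we have $c \, q^{>}_i \, a$.

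The payoff comes from combining two social comparisons in $q$. Because $q \in U_{ac}$ and $a D_U c$, decisiveness gives $a \, \str{\sigma(q)} \, c$. Because \emph{every} individual, inside and outside $U$, strictly prefers $c$ to $b$ in $q$, the weak Pareto principle $\WP$ gives $c \, \sigma(q) \, b$. It then remains to invoke the elementary fact about weak orders that a strict social preference followed by a weak one is again strict: from $a \, \str{\sigma(q)} \, c$ and $c \, \sigma(q) \, b$ we obtain $a \, \str{\sigma(q)} \, b$ (for if $b \, \sigma(q) \, a$ held, transitivity of $\sigma(q)$ would force $c \, \sigma(q) \, a$, contradicting $a \, \str{\sigma(q)} \, c$). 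Finally, since $q | \{a,b\} = p | \{a,b\}$, the axiom $\IIA$ transports this conclusion back to $p$, yielding $a \, \sigstr \, b$, as required.

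The main obstacle is the design of $q$, which must simultaneously meet three competing demands: agree with $p$ on $\{a,b\}$ so that $\IIA$ can return the verdict; lie in $U_{ac}$ so that decisiveness applies; and display unanimous agreement of $c$ over $b$ so that $\WP$ applies. The asymmetric placement of $c$ --- in the middle for the members of $U$ but at the very top for the members of $\UC$ --- is exactly what reconciles these constraints, and it is this asymmetry, together with the strict-then-weak inference, that upgrades bare decisiveness on the opposed profiles of $U_{ab}$ to the unconditional form $E_U$.
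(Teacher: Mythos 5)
Your proposal is correct and follows essentially the same route as the paper's proof: the same auxiliary profile $q$ (strict chain $a,c,b$ inside $U$; $c$ on top and $p_i$'s ranking of $\{a,b\}$ outside $U$), the same use of decisiveness for $a$ over $c$, $\WP$ for $c$ over $b$, and $\IIA$ to transport the conclusion back to $p$. The only cosmetic differences are that you cite Local Neutrality (Theorem~\ref{neutthm}) where the paper cites Proposition~\ref{arrprop} directly, and you verify the strict-then-weak composition by a direct transitivity/contradiction argument where the paper appeals to transitivity or absorption.
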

\begin{proof}
We shall prove $a D_U b \IMP a E_U b$.
Suppose we are given a profile $p$ such that $a \str{p_U} b$.
We can find an element $c \in A$ with $a \neq c \neq b$. We consider a profile $q \in \Dom$ such that for all $i \in U$, $q_i$ restricts to the strict chain $acb$, and for all $i \in \UC$, $c$ is strictly preferred to both $a$ and $b$ in $q_i$, while $q_i | \{ a, b\} = p_i | \{ a, b \}$. Such a profile exists by $\UD$. 
Note that $q \in U_{ac} \cap \Ind_{cb}$, and $q | \{ a, b \} = p | \{ a, b \}$. Since $a D_U b$, by Proposition~\ref{arrprop} $a D_U c$, and so $a \str{\sigma(q)} c$. By $\WP$, $c \sigma(q) b$. By transitivity if $c \str{\sigma(q)} b$, or by absorption otherwise, $a \str{\sigma(q)} b$. Since $p | \{ a, b \} = q | \{ a, b\}$, by $\IIA$ we conclude that $a \sigstr b$, and hence $a E_U b$ as required.
\end{proof}

\subsubsection{The Ultrafilter of Decisive Sets}

In this subsection, we assume 
the strong Pareto principle $\Pareto$, which serves as a basic existence principle for decisive sets. Note indeed that $\Pareto$ is equivalent to the statement that $\Ind$ is a decisive set.

We define
\[ \UU \; := \; \{ U \subseteq \Ind \mid \exists a, b \in A. \, a D_U b \} . \]

\begin{theorem}[The Ultrafilter Theorem]
\label{ultrafilthm}
$\UU$ is an ultrafilter.
\end{theorem}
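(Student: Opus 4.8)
The plan is to check directly that $\UU$ satisfies the defining closure properties of an ultrafilter on $\Ind$: it contains $\Ind$ but not $\varnothing$, it is closed under supersets and under binary intersection, and for every $U \subseteq \Ind$ exactly one of $U$ and $\UC$ belongs to $\UU$. Two facts established above will be used throughout. By Local Neutrality (Theorem~\ref{neutthm}), decisiveness is independent of the witnessing pair, so $U \in \UU$ iff $a D_U b$ holds for some, equivalently every, pair of distinct $a, b$. And there is a ``single-witness'' principle, which is really just an application of $\IIA$: if some $p \in U_{ab}$ has $a \sigstr b$, then every $q \in U_{ab}$ satisfies $q | \{ a, b \} = p | \{ a, b \}$, so $\IIA$ forces $a \, \sigma(q)^{>} \, b$ for all such $q$, giving $a D_U b$ and hence $U \in \UU$. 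The three easy clauses then fall out quickly. Membership $\Ind \in \UU$ is exactly the Pareto principle $\Pareto$. For $\varnothing \notin \UU$, a profile (available by $\UD$) in which every individual ranks $b$ above $a$ lies in $\varnothing_{ab}$, so decisiveness of $\varnothing$ would yield $a \sigstr b$, whereas $\Pareto$ yields $b \sigstr a$, contradicting asymmetry of strict social preference. Upward closure is immediate from Monotonicity (Proposition~\ref{monprop}): if $U \subseteq V$ and $U \in \UU$, then $a E_U b$, and since $a \str{p_V} b$ entails $a \str{p_U} b$ we get $a \sigstr b$, so $a E_V b$ and $V \in \UU$.

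The crux, and the step I expect to be the main obstacle, is closure under binary intersection. Given $U, V \in \UU$, I would pick three distinct alternatives $a, b, c$ (possible since $\card{A} \geq 3$) and use $\UD$ to construct a profile $p$ that ranks $a \succ b \succ c$ on $U \cap V$, ranks $c \succ a \succ b$ on $U \setminus V$, ranks $b \succ c \succ a$ on $V \setminus U$, and places $c$ above $a$ off $U \cup V$. The point of these rotated orders is that every member of $U$ ranks $a$ above $b$, while every member of $V$ ranks $b$ above $c$; Monotonicity then delivers $a \sigstr b$ and $b \sigstr c$ regardless of the individuals outside $U$ and $V$, and transitivity gives $a \sigstr c$. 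Finally $p \in (U \cap V)_{ac}$, since $U \cap V$ ranks $a$ above $c$ while everyone else ranks $c$ above $a$; the single-witness principle then gives $a D_{U \cap V} c$, \ie $U \cap V \in \UU$. The delicate part is exactly the simultaneous arrangement of the three cyclic orders so that $U$ is unanimous on $a \succ b$, $V$ is unanimous on $b \succ c$, and the complement of $U \cap V$ is unanimous on $c \succ a$; Monotonicity is what frees the arguments about $U$ and $V$ from any dependence on the remaining individuals.

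For the ultra property, given an arbitrary $U \subseteq \Ind$ I would again fix distinct $a, b, c$ and use $\UD$ to build a profile $p$ ranking $a \succ b \succ c$ on $U$ and $b \succ c \succ a$ on $\UC$. Everyone ranks $b$ above $c$, so $\Pareto$ gives $b \sigstr c$. Applying trichotomy to the social preference on the pair $a, b$ yields three cases. If $a \sigstr b$, then $p \in U_{ab}$ and $U \in \UU$. If $b \sigstr a$, then $p \in (\UC)_{ba}$ and $\UC \in \UU$. If $a$ and $b$ are socially indifferent, then Absorption applied to this indifference together with $b \sigstr c$ gives $a \sigstr c$, and since $p \in U_{ac}$ we again get $U \in \UU$. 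Hence at least one of $U, \UC$ lies in $\UU$; and never both, because closure under intersection and $\varnothing \notin \UU$ rule out $U \cap \UC = \varnothing \in \UU$. This establishes that $\UU$ is an ultrafilter. I note that finiteness of $\Ind$ plays no role in this theorem; it is needed only afterwards, to conclude that the ultrafilter is principal and so pick out the dictator.
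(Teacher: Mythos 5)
Your proof is correct, but it takes a genuinely different route from the paper's. The paper does not verify the standard ultrafilter axioms directly: it proves (F1) $\Ind \in \UU$, (F2) upward closure, (F3) no two disjoint sets both lie in $\UU$, and (F4) a \emph{splitting} property (if $U = V \sqcup W$ is decisive then $V$ or $W$ is), and then asserts without proof that (F1)--(F4) are equivalent to the standard definition. Its key construction is the three-way cyclic profile ($bca$ on $V$, $cab$ on $W$, $abc$ on $U^c$) used for (F4). You instead verify the standard axioms head-on: properness via a unanimous profile contradicting $\Pareto$, intersection closure via a four-block profile (on $U \cap V$, $U \setminus V$, $V \setminus U$, and the rest) combined with Monotonicity and transitivity, and the ultra property via a two-block profile with a trichotomy case analysis, handling social indifference by absorption just as the paper does elsewhere. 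Both arguments draw on the same toolkit --- $\UD$ for existence of profiles, the ``single-witness'' consequence of $\IIA$, Local Neutrality, and Monotonicity --- and your uses of them all check out (in particular, Monotonicity is exactly what frees your intersection argument from the individuals outside $U \cup V$, and your absorption step $aIb \wedge bPc \Rightarrow aPc$ is a legitimate instance of the paper's absorption axiom with $cIc$). What each approach buys: yours is self-contained, landing exactly on the textbook definition of ultrafilter with no leftover equivalence to check, at the cost of two separate profile constructions; the paper's single splitting construction is leaner, but it defers to an unproven (if routine) piece of filter algebra to reach the standard axioms. Your closing remark that finiteness of $\Ind$ is not used in this theorem, only in extracting a principal ultrafilter afterwards, also agrees with the paper.
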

\begin{proof}
(F1) As we have already noted, $\Pareto$ implies that $\Ind \in \UU$.

(F2) Now suppose that $U \in \UU$ and $U \subseteq V$. By Proposition~\ref{monprop}, we can conclude that $V \in \UU$, since clearly $U \subseteq V$ implies that $E_U \subseteq E_V$.

(F3) Now suppose for a contradiction that $U$ and $V$ are both in $\UU$, where $U \cap V = \varnothing$.
Consider a profile $p \in \Dom$ such that $a \str{p_U} b$ and $b \str{p_{V}} a$. By Proposition~\ref{monprop}, we have both $a \sigstr b$ and $b \sigstr a$, yielding a contradiction.

(F4) Finally suppose that $U \in \UU$ can be written as a disjoint union $U = V \sqcup W$.
We shall show that either $V \in \UU$ or $W \in \UU$.
Consider a profile $p \in \Dom$ such that for each $i \in V$, $p_i$ restricts to the strict chain $bca$, for each $i \in W$, $p_i$ restricts to the strict chain $cab$, while for each $i \in U^c$, $p_i$ restricts to the strict chain $abc$. Such a profile exists by $\UD$.
Note that $p \in U_{ca} \cap V_{ba} \cap W_{cb}$.
We argue by  cases:
\begin{itemize}
\item If $c \sigstr b$, then by $\IIA$, for all $q \in W_{cb}$, $c \str{\sigma(q)} b$, and hence $c D_W b$, and $W \in \UU$.
\item Otherwise, we must have $b \sigma(p) c$. Since $p \in U_{ca}$ and $U \in \UU$, using the Neutrality Theorem~\ref{neutthm}, we must have $c \sigstr a$. By absorption, $b \sigstr a$. By $\IIA$, for all $q \in V_{ba}$, $b \str{\sigma(q)} a$, and hence $b D_V a$, and $V \in \UU$.
\end{itemize}

The conditions (F1)--(F4) are easily seen to be equivalent to the standard definition of an ultrafilter, given as (F1) and (F2) together with:

(F5) $\varnothing \not\in \UU$.

(F6) $U, V \in \UU \IMP U \cap V \in \UU$.

(F7) $\forall U \subseteq \Ind. \, U \in \UU \OR U^c \in \UU$.
\end{proof}

We define the set of ballots which are linear on the alternatives $a, b$:
\[ \LL_{ab} \; := \;  \{ p \in \Dom \mid p | \{ a, b \} \in \Lin(\{ a, b \}) \} \; = \; \{ p \in \Dom \mid \exists U \subseteq \Ind. \, p \in U_{ab} \} . \]
We now show that $\UU$ completely determines $\sigma$ on linear ballots.
\begin{proposition}
\label{Udetswprop}
For all $a, b \in A$, $p \in \LL_{ab}$:
\[ a \sigstr b \IFF \{ i \in \Ind \mid a \str{p_i} b \} \in \UU . \]
\end{proposition}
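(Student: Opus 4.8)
The plan is to reduce the biconditional to the decisiveness relation $D_U$ for the set $U := \{ i \in \Ind \mid a \str{p_i} b \}$, and then to transfer between $a D_U b$ and membership $U \in \UU$ using Local Neutrality. First I would dispose of the degenerate case $a = b$: both sides fail, the left by irreflexivity of strict preference and the right because $U = \varnothing \notin \UU$, so the biconditional holds vacuously. Henceforth I assume $a \neq b$.

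The first substantive observation is that $p \in U_{ab}$. Since $p \in \LL_{ab}$, the restriction $p | \{ a, b \}$ is linear, so each individual $i$ strictly prefers $a$ to $b$ or $b$ to $a$ (there is no indifference on $\{a,b\}$). Thus $U = \{ i \mid a \str{p_i} b \}$ and $\UC = \{ i \mid b \str{p_i} a \}$, which is exactly the defining condition $a \str{p_U} b \AND b \str{p_{\UC}} a$ of $U_{ab}$.

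Next I would establish the key equivalence $a \sigstr b \IFF a D_U b$. The direction $a D_U b \IMP a \sigstr b$ is immediate from the definition of $D_U$, since $p \in U_{ab}$. For the converse, the crucial point --- the step I expect to carry the weight of the argument --- is that $\IIA$ upgrades a single witness into decisiveness. Every $q \in U_{ab}$ satisfies $q | \{ a, b \} = p | \{ a, b \}$, because by definition all profiles in $U_{ab}$ rank $a$ above $b$ exactly on $U$ and $b$ above $a$ exactly on $\UC$. Hence $\IIA$ gives $\sigma(q) | \{ a, b \} = \sigma(p) | \{ a, b \}$, so $a \sigstr b$ propagates to $a \sigma(q)^{>} b$ for every $q \in U_{ab}$, which is precisely $a D_U b$.

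Finally I would close the loop $a D_U b \IFF U \in \UU$. The forward direction is just the definition of $\UU$. For the converse, $U \in \UU$ yields some $x, y$ with $x D_U y$, and Local Neutrality (Theorem~\ref{neutthm}) then delivers $a D_U b$ since $a \neq b$. Chaining $a \sigstr b \IFF a D_U b \IFF U \in \UU$ completes the argument. The only genuine obstacle is the $\IIA$ step, where a property holding at one profile $p$ must be promoted to the universally quantified statement $a D_U b$; everything else is bookkeeping with the definitions already in place. Note that Monotonicity is not needed here, as $\IIA$ supplies decisiveness directly.
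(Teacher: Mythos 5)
Your proof is correct, but the forward direction takes a genuinely different route from the paper's. The paper derives $a \sigstr b \IMP U \in \UU$ from property (F7) of the Ultrafilter Theorem~\ref{ultrafilthm}: either $U \in \UU$ or $\UC \in \UU$, and the latter is impossible, since $p \in (\UC)_{ba}$ would then (via Neutrality) give $b \sigstr a$, contradicting $a \sigstr b$. You bypass the ultrafilter dichotomy entirely: since every $q \in U_{ab}$ has the same restriction to $\{a,b\}$ as $p$ (each individual's preference on $\{a,b\}$ is completely pinned down by whether $i \in U$ or $i \in \UC$), $\IIA$ promotes the single witness $a \sigstr b$ to the universally quantified statement $a D_U b$, whence $U \in \UU$ holds by definition. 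This promotion trick is exactly the move the paper itself makes inside Proposition~\ref{arrprop} and Theorem~\ref{ultrafilthm}, so your argument stays within the paper's toolkit; what it buys is that the forward implication needs only $\IIA$, with no appeal to (F7), whose own proof rests on Pareto and the splitting argument (F4). The backward direction is essentially the paper's, except that you make explicit the appeal to Local Neutrality (Theorem~\ref{neutthm}) needed to pass from ``$U$ is decisive for \emph{some} pair'' to $a D_U b$ --- a step the paper compresses into ``immediate'' --- and you also dispose of the degenerate case $a = b$, which the paper leaves implicit (your claim $\varnothing \notin \UU$ there is property (F5), or follows directly from $\UD$ and weak Pareto).
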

\begin{proof}
The right-to-left implication is immediate, since if $U =  \{ i \in \Ind \mid a \str{p_i} b \}$, $p \in \LL_{ab}$ implies that $p \in U_{ab}$.
For the converse, we use property (F7) from Theorem~\ref{ultrafilthm}.
\end{proof}

We also show that social choice functions map linear ballots to strict preferences.
\begin{proposition}
\label{linresprop}
For all distinct alternatives $a, b \in A$:
\[ \forall p \in \LL_{ab}. \, a \sigstr b \OR b \sigstr a . \]
\end{proposition}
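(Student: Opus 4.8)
The plan is to reduce this to the characterization of $\sigma$ on linear ballots already established in Proposition~\ref{Udetswprop}, combined with the ultrafilter dichotomy (F7) from Theorem~\ref{ultrafilthm}. First I would unwind what membership in $\LL_{ab}$ buys us for the individuals: since $p | \{a, b\} \in \Lin(\{a, b\})$ and $a \neq b$, connectedness and antisymmetry on the two-element set $\{a, b\}$ force each $p_i$ to pick out \emph{exactly one} of $a \str{p_i} b$ or $b \str{p_i} a$. Setting $U := \{ i \in \Ind \mid a \str{p_i} b \}$, this means its complement is precisely $U^c = \{ i \in \Ind \mid b \str{p_i} a \}$, so the ``$a$-preferrers'' and ``$b$-preferrers'' genuinely partition $\Ind$.

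Next I would invoke property (F7), which tells us that either $U \in \UU$ or $U^c \in \UU$. In the first case, Proposition~\ref{Udetswprop} applied to the pair $(a, b)$ yields $a \sigstr b$ directly, since the set $\{ i \mid a \str{p_i} b \} = U$ lies in $\UU$. In the second case I would note that $\LL_{ab} = \LL_{ba}$ (as $\{a, b\} = \{b, a\}$), so Proposition~\ref{Udetswprop} is equally available for the pair $(b, a)$; here the relevant set of individuals is $\{ i \mid b \str{p_i} a \} = U^c \in \UU$, whence $b \sigstr a$. In either branch we obtain $a \sigstr b \OR b \sigstr a$, which is the desired conclusion.

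I do not anticipate any genuine obstacle: the substantive work has already been carried out in Proposition~\ref{Udetswprop} and in establishing that $\UU$ is an ultrafilter. The one point deserving a moment's care is the opening observation, namely that linearity on $\{a,b\}$ makes $U$ and $U^c$ exactly complementary; this is what guarantees that (F7) partitions the individuals into the two strict-preference camps and thereby licenses the strict dichotomy in the statement. Everything else is a direct application of the two cited results.
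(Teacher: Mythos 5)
Your proof is correct and takes essentially the same approach as the paper, whose own proof is a one-liner citing Proposition~\ref{Udetswprop} together with property (F7) of Theorem~\ref{ultrafilthm}. Your write-up merely makes explicit the details the paper leaves to the reader: that linearity of $p|\{a,b\}$ makes the sets $\{ i \mid a \str{p_i} b \}$ and $\{ i \mid b \str{p_i} a \}$ exact complements, and that Proposition~\ref{Udetswprop} can be applied symmetrically to the pair $(b,a)$ in the second branch of the (F7) dichotomy.
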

\begin{proof}
Immediate from the previous proposition and property (F7) from Theorem~\ref{ultrafilthm}.
\end{proof}

\subsection{Arrow's Theorem}

\begin{theorem}[Arrow]
Let $(A, \Ind, \Dom, \sigma)$ be a social choice situation with $\card{A} \geq 3$, satisfying $\UD$, $\IIA$ and $\Pareto$. Then if  $\Ind$ is finite it also satisfies $\Dict$, \ie there is a dictator.
\end{theorem}
\begin{proof}
By the Ultrafilter Theorem~\ref{ultrafilthm}, $\UU$ is an ultrafilter. Since $\Ind$ is finite, $\UU$ must be principal, consisting of all supersets of $\{ i \}$ for some $i \in \Ind$. Then $D_{\{ i \}}$ is decisive, or equivalently by Proposition~\ref{monprop},  $i$ is a dictator.
\end{proof}

\section{Categorical Formulation of Arrow's Theorem}

Given a universe $\AU$ of possible alternatives, where the cardinality of $\AU$ is $\geq 3$, we consider the category $\CC$ whose objects are subsets of $\mathbb{A}$, and whose morphism are injective maps; and its posetal sub-category $\Cinc$ with morphisms the inclusions.
We shall  use  $\CC^{(k)}$ and $\Cinc^{(k)}$ to denote the full sub-categories of $\CC$ and $\Cinc$ respectively determined by the sets $A$ of cardinality $\leq k$, for $k \geq 0$. We write $\CC^{\op}$ for the opposite category of $\CC$.

For any notion of binary preference relation axiomatized by \emph{universal} sentences (universal closures of quantifier-free formulas), we get a functor
\[ \PP : \CC^{\op} \lrarr \Set . \]
$\PP(X)$ is the set of preference relations on $X$, and if $f : X \rinj Y$ and $p \in \PP(Y)$, then we define
\[ x \, (\PP(f)(p)) \, x'  \IFF f(x) \, p \, f(x') . \]
Note that injectivity ensures that $(X, \PP(f)(p))$ is isomorphic to a sub-structure of $(Y, p)$, and hence the truth of universal sentences is preserved \cite{hodges1997shorter}.

Also note that $\PP$ cuts down to a functor $\PP_{\mathsf{inc}} : \Cinc^{\op} \lrarr \Set$.

We shall use  $\Pref$ to denote the functor induced by the notion of weak preference relation introduced in the previous section, and $\Lin$ for the subfunctor of linear preference relations.

\subsection{Categorical formulation of  $\UD$}

The functor $\PI$ is defined as the product of $\Ind$ copies of $\Pinc$; thus for each $A$, $\PI(A) := \Pinc(A)^{\Ind}$. This gives the set of all possible profiles over a set of alternatives $A$ for the agents in $\Ind$.
We shall assume we are given a subfunctor $\Dinj$ of $\Pref^{\Ind}$. Thus $\Dinj : \CC^{\op} \lrarr \Set$ is a functor, with a natural transformation $\Dinj \natarrow \Pref^{\Ind}$ whose components are inclusion maps.
$\Dinj$ restricts to a functor $\Dinc : \Cinc^{\op} \lrarr \Set$.

The axiom $\UD$ can be stated in these terms as follows:

\begin{center}
\begin{tabular}{|lll|}\hline
\textbf{(CUD)} & (i) & For $A \in \CC^{(3)}$, $\Dinc(A) = \PI(A)$. \\
& (ii) & $\Dinc$ preserves epis. \\ \hline
\end{tabular}
\end{center}
The requirement that $\Dinc$ preserves epis means that  inclusions $\iota : A \rinc B$ are mapped to surjections $\Dinc(\iota) : \Dinc(B) \lrarr \Dinc(A)$.


\subsection{Categorical formulation of $\IIA$}
 
 Next, we make the observation that 
\textbf{IIA} is equivalent to the following statement:
\begin{center}
\begin{tabular}{|lc|}\hline
\textbf{(CIIA)} & The social welfare function is a natural transformation \\ 
& $\sw : \Dinc \natarrow \Pinc$. \\ \hline
\end{tabular}
\end{center}
Explicitly, this says that for each set of alternatives $A$ we have a map
\[ \sw_A : \Dinc(A) \lrarr \Pinc(A) \]
such that, for all inclusions $A \subseteq B$ and profiles $p \in \Dinc(A)$:
\[ \sw_A(p | A) = \sw_{B}(p) | A  
\]
More precisely, we have the following result.
\begin{proposition}
We assume that $\Dinc$ is a subfunctor of $\PI$ satisfying $\CUD$.
\begin{enumerate}
\item If $\sw_{\AU} : \Dinc(\AU) \lrarr \Pinc(\AU)$ is a function satisfying $\IIA$, then it extends to a natural transformation $\sw : \Dinc \natarrow \Pinc$.
\item If $\sw : \Dinc \natarrow \Pinc$ is a natural transformation, then for every $A \in \CC$,
$\sw_A$ satisfies $\IIA$.
\end{enumerate}
\end{proposition}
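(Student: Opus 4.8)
The plan is to treat the two implications separately. For part (1) I would define each component $\sw_A$ by ``pulling back'' the top-level function $\sw_{\AU}$ along restriction, and for part (2) I would read off $\IIA$ directly from the naturality square attached to a two-element inclusion. Throughout, the only feature of $\CUD$ I expect to need is the epi-preservation clause (ii): it guarantees that for every $A \subseteq \AU$ the restriction map $\Dinc(\iota) : \Dinc(\AU) \lrarr \Dinc(A)$ associated with $\iota : A \hookrightarrow \AU$ is surjective. Clause (i) plays no role in this proposition.

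For part (1), given $A \in \CC$ and $p \in \Dinc(A)$, I would choose any $\tilde{p} \in \Dinc(\AU)$ with $\tilde{p} \res A = p$ — one exists by the surjectivity just noted — and set $\sw_A(p) := \sw_{\AU}(\tilde{p}) \res A$. The crux, and what I expect to be the main obstacle, is showing this is well-defined, i.e. independent of the chosen $\tilde{p}$. Suppose $\tilde{p}$ and $\tilde{p}'$ both restrict to $p$ on $A$. Then for any pair $a, b \in A$ they agree on $\{a,b\}$, since by functoriality of restriction $\tilde{p}\res\{a,b\} = (\tilde{p}\res A)\res\{a,b\} = p\res\{a,b\} = \tilde{p}'\res\{a,b\}$; hence $\IIA$ applied to $\sw_{\AU}$ yields $\sw_{\AU}(\tilde{p})\res\{a,b\} = \sw_{\AU}(\tilde{p}')\res\{a,b\}$. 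As this holds for every pair drawn from $A$, the two restrictions to $A$ coincide. Specialising to $A = \AU$ recovers $\sw_{\AU}$, so $\sw$ genuinely extends it.

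It then remains to check naturality, i.e. that $\sw_B(p)\res A = \sw_A(p\res A)$ for every inclusion $A \hookrightarrow B$ and $p \in \Dinc(B)$. Here I would use well-definedness to evaluate both sides with a single witness: pick $\tilde{p} \in \Dinc(\AU)$ restricting to $p$ on $B$, which (again by functoriality) also restricts to $p\res A$ on $A$. Then $\sw_B(p)\res A = (\sw_{\AU}(\tilde{p})\res B)\res A = \sw_{\AU}(\tilde{p})\res A = \sw_A(p\res A)$, as required. So the earlier freedom in choosing extensions makes naturality mere bookkeeping.

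Part (2) is immediate. Given a natural transformation $\sw : \Dinc \natarrow \Pinc$, fix $A \in \CC$, alternatives $a, b \in A$, and profiles $p, q \in \Dinc(A)$ with $p\res\{a,b\} = q\res\{a,b\}$. Applying the naturality square to the inclusion $\{a,b\} \hookrightarrow A$ gives $\sw_A(p)\res\{a,b\} = \sw_{\{a,b\}}(p\res\{a,b\})$, and similarly for $q$. Since the arguments on the right agree, so do the values, whence $\sw_A(p)\res\{a,b\} = \sw_A(q)\res\{a,b\}$, which is exactly $\IIA$. Thus the whole content sits in part (1), where epi-preservation supplies the extensions and $\IIA$ makes them interchangeable.
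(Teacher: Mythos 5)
Your proof is correct and takes essentially the same approach as the paper: you define $\sw_A$ by lifting $p$ to $\Dinc(\AU)$ via the surjectivity supplied by clause (ii) of $\CUD$, establish independence of the lift by applying $\IIA$ pairwise (the paper packages this as a generalized form of $\IIA$, justified by the fact that a binary relation is determined by its restrictions to subsets of cardinality $\leq 2$), verify naturality by the same one-witness computation, and obtain part (2) from the naturality square at the inclusion $\{a,b\} \subseteq A$, exactly as the paper does. Your observation that clause (i) of $\CUD$ is not needed is also consistent with the paper's argument, which likewise uses only epi-preservation.
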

\begin{proof}
1. Firstly, note that $(\AU, \Ind, \Dinc(\AU), \sw_{\AU})$ is a social choice situation in the sense of the previous section. We are assuming that this structure satisfies 
$\IIA$.

We note that $\IIA$ implies the following, more general statement: for all $A \subseteq \AU$, and $p, q \in \Dinc(\AU)$,
\[ p | A = q | A \IMP \sw_{\AU}(p) | A = \sw_{\AU}(q) | A . \]
This holds because any binary relation on a set $X$  is determined by its restrictions to the subsets of $X$ of cardinality $\leq 2$.

Given $A \subseteq \AU$ and $p \in \Dinc(A)$, by $\CUD$ there is $q \in \Dinc(\AU)$ such that $q | A = p$. We define $\sw_A(p) := \sw_{\AU}(q) | A$. By our previous remark, this is independent of the choice of $q$. For naturality, if $\iota : A \rinc B$ and $p \in \Dinc(B)$, then for any $q \in \Dinc(\AU)$ such that $q | B = p$, $q | A = \Dinc(\iota)(q | B)$, and hence 
\[ \sw_A \circ \Dinc(\iota)(p) = \sw_{\AU}(q) | A = (\sw_{\AU}(q) |B) | A = \Pinc(\iota) \circ \sw_B(p) . \]

2. For the converse, if $\sw : \Dinc \natarrow \Pinc$ is a natural transformation, $A \subseteq \AU$, and $ \iota : \{a, b \} \rinc A$, then $p, q \in \Dinc(A)$ with $p | \{ a, b\} = q | \{ a, b \}$ means that $\Dinc(\iota)(p) = \Dinc(\iota)(q)$. Using naturality, we have
\[ \sw_A (p) | \{ a, b\} = \Pinc(\iota) \circ \sw_A (p) = \sw_{ \{ a, b\} } \circ \Dinc(\iota) (p) = \sw_{ \{ a, b\} } \circ \Dinc(\iota) (q) = \sw_A(q) | \{ a, b\} . 
\]
\end{proof}

\subsection{Categorical formulation of $\Pareto$}
Consider the standard diagonal map $\Delta_I : X \lrarr X^{\Ind}$. An arrow $f : X^{\Ind} \rarr X$ is \emph{diagonal-preserving} if $f \circ \Delta_I = \id_{X}$.
The Pareto condition is essentially a form of diagonal preservation.

Firstly, note that given a functor $F : \Cinc^{\op} \lrarr \Set$, we can define the restriction $F^{(2)} : (\Cinc^{(2)})^{\op} \lrarr \Set$. Now $\Delta_I$ induces a natural transformation 
$\Lininc^{(2)} \natarrow (\PI)^{(2)}$. Using part (i) of $\CUD$, this factors through the inclusion $\Dinc^{(2)} \rinc (\PI)^{(2)}$. Thus we obtain a natural transformation  $\Delta_I : \Lininc^{(2)} \natarrow \Dinc^{(2)}$.
Also, $\Lininc^{(2)}$ is a sub-functor of $\Pinc^{(2)}$, with inclusion $e : \Lininc^{(2)} \natarrow \Pinc^{(2)}$. The categorical formulation of the Pareto condition is now as follows:
\begin{center}
\begin{tabular}{|lc|}\hline
\textbf{(CP)} & $\sigma \circ \Delta_I = e $. \\ \hline
\end{tabular}
\end{center}
Diagrammatically, this is
\begin{diagram}[4em]
\Lininc(\{ a, b \}) & \rTo^{\Delta_I} & \Dinc(\{a, b \}) \\
& \rdinc & \dTo_{\sw_{\{ a, b \}} } \\
& & \Pinc(\{ a, b \})
\end{diagram}

\begin{proposition}
Let $\Dinc$  be a subfunctor of $\PI$ satisfying $\CUD$, and $\sw : \Dinc \natarrow \Pinc$ a natural transformation. Then $\sw$ satisfies $\CP$ if and only if for every $A \subseteq \AU$, $(A, \Ind, \Dinc(A), \sw_A)$ satisfies $\Pareto$.
\end{proposition}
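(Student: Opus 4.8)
The plan is to prove the two implications separately, in each case translating between the categorical data, which lives only on two-element sets and linear orders, and the set-theoretic $\Pareto$ condition, which quantifies over arbitrary $A$ and arbitrary profiles. The one substantive ingredient is a dictionary entry: for a weak preference relation $p_i$ on a set containing distinct $a, b$, the strict preference $a \pstr_i b$ holds if and only if the restriction $p_i | \{ a, b \}$ equals the unique linear order on $\{ a, b \}$ ranking $a$ strictly above $b$, which I write $r_{ab}$ (one checks $a \pstr_i b$ gives $a R b$, $\neg b R a$ on $\{a,b\}$, and conversely). Applying this to every individual, a profile $p$ satisfies $\forall i \in \Ind.\, a \pstr_i b$ precisely when $p | \{ a, b \} = \Delta_I(r_{ab})$, the constant (diagonal) profile with value $r_{ab}$. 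Note that $\Delta_I(r_{ab})$ genuinely lies in $\Dinc(\{ a, b \})$: this is where part (i) of $\CUD$ enters, since it forces $\Dinc(\{ a, b \}) = \PI(\{ a, b \})$ for the two-element set.

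For the forward implication ($\CP \IMP \Pareto$) I would fix $A \subseteq \AU$, a profile $p \in \Dinc(A)$, and distinct $a, b \in A$ with $\forall i.\, a \pstr_i b$. By the dictionary, $p | \{ a, b \} = \Delta_I(r_{ab})$. Writing $\iota : \{ a, b \} \hookrightarrow A$ for the inclusion, naturality of $\sw$ (which is the content of the previous subsection, equivalent to $\IIA$) gives
\[ \sw_A(p) | \{ a, b \} \; = \; \sw_{\{ a, b \}}(p | \{ a, b \}) \; = \; \sw_{\{ a, b \}}(\Delta_I(r_{ab})) . \]
Applying $\CP$, namely $\sw_{\{ a, b \}} \circ \Delta_I = e$, the right-hand side is $e(r_{ab}) = r_{ab}$. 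Hence $\sw_A(p) | \{ a, b \}$ ranks $a$ strictly above $b$, which is exactly $a \sigstr b$. As this holds for all $A$, $p$, $a$, $b$, the situation $(A, \Ind, \Dinc(A), \sw_A)$ satisfies $\Pareto$.

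For the backward implication I only need $\Pareto$ on two-element sets, so $\CP$ follows without invoking naturality at all. Given a linear order $r$ on $\{ a, b \}$ I may take $r = r_{ab}$ by symmetry. The diagonal profile $\Delta_I(r_{ab}) \in \Dinc(\{ a, b \})$ has every individual strictly preferring $a$ to $b$, so $\Pareto$ for $(\{ a, b \}, \Ind, \Dinc(\{ a, b \}), \sw_{\{ a, b \}})$ yields $a \, R \, b$ and $\neg\, b \, R \, a$, where $R := \sw_{\{ a, b \}}(\Delta_I(r_{ab}))$. Since $R$ is a weak preference relation it is connected and hence reflexive, so on the two-element set the data $a R b$, $\neg b R a$ pin $R$ down completely as $r_{ab} = e(r_{ab})$. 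This is precisely the commutativity $\sw_{\{ a, b \}} \circ \Delta_I = e$ asserted by $\CP$.

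I expect the only care needed to be the bookkeeping of the dictionary together with the two uses of part (i) of $\CUD$: first in ensuring that $\Delta_I$ factors through $\Dinc$, so that the composites appearing in $\CP$ and the instances of $\Pareto$ are even well-typed, and second, in the forward direction, in the appeal to naturality, which is what localizes the social decision on the pair $\{ a, b \}$ to the two-element subsituation. Everything else is definitional unwinding.
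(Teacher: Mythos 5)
Your proof is correct. There is, in fact, nothing in the paper to compare it against: the paper states this proposition without proof (unlike the preceding proposition on $\IIA$, which it does prove), so your argument supplies the verification the paper leaves implicit, and it is the one the framework clearly intends. The two essential points are exactly the ones you isolate: the dictionary identifying $\forall i \in \Ind.\ a \pstr_i b$ with $p \res \{a,b\} = \Delta_I(r_{ab})$, which is well-typed inside $\Dinc(\{a,b\})$ precisely because of part (i) of $\CUD$; and naturality of $\sw$ at the inclusion $\{a,b\} \subseteq A$, which localizes the social decision to the pair set in the forward direction (mirroring how the paper's proof of the $\IIA$ proposition uses the same naturality square). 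Your observation that the backward direction needs $\Pareto$ only on two-element sets, and no naturality at all, is also right and worth keeping explicit. One pedantic remark for a complete write-up: $\CP$ is an equation of natural transformations over $\Cinc^{(2)}$, so it also has components at sets of cardinality $0$ and $1$; these hold automatically, since $\Pinc(A)$ is a singleton when $\card{A} \leq 1$ (connectedness forces reflexivity, so the only weak preference relation is the full/identity one), and hence your restriction to the two-element components loses nothing --- but that one line should be said.
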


\subsection{Categorical Formulation of Arrow's Theorem}

A \emph{categorical social choice situation} is given by a set $\AU$ determining a category $\CC$, a set $\Ind$ of individuals, a subfunctor $\Dinc$ of $\PI$ satisfying $\CUD$, and a natural transformation $\sw : \Dinc \natarrow \Pinc$.

\begin{theorem}[Arrow's Theorem: Categorical Statement]
\label{catarrowthm}
Let $(\AU, \Ind, \Dom, \sw)$ be a categorical social choice situation where $\card{\AU} \geq 3$, $\Ind$ is finite, and $\sw$ satisfies $\CP$. Then $\sw = \pi_i$ for some fixed $i \in \Ind$, where $(\pi_i)_A : p \mapsto p_i$. 
\end{theorem}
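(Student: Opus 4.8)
The plan is to bootstrap the categorical statement off the classical one already proved in Section~2, by first translating the three categorical hypotheses into their concrete counterparts. Condition $\CUD$ makes $(\AU, \Ind, \Dinc(\AU), \sw_{\AU})$ a social choice situation satisfying $\UD$; the fact that $\sw$ is a natural transformation is, by the first proposition of Section~3.2, precisely $\IIA$; and $\CP$ is, by the proposition of Section~3.3, precisely $\Pareto$. Hence the situation on the full alternative set $\AU$ meets the hypotheses of the classical Arrow theorem of Section~2.4. Applying the Ultrafilter Theorem~\ref{ultrafilthm}, the family $\UU$ of decisive coalitions is an ultrafilter on $\Ind$; since $\Ind$ is finite, $\UU$ is principal, generated by some singleton $\{ i \}$, so that $U \in \UU \IFF i \in U$.

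It then remains to promote ``$i$ is a dictator'' to the functorial identity $\sw = \pi_i$. Since $\sw$ and $\pi_i$ are both natural transformations $\Dinc \natarrow \Pinc$, and (as exploited in the proof of $\IIA$) any preference relation is determined by its restrictions to two-element subsets, it suffices by naturality to compare $\sw_A(p) | \{ a, b \}$ with $p_i | \{ a, b \}$ on each pair $a \neq b$. A weak preference relation on $\{ a, b \}$ is in turn pinned down by its strict part via $a R b \IFF \neg(b P a)$, so the target reduces to the single equivalence $a \, \sigstr \, b \IFF a \, \pstr_i \, b$. The implication $a \, \pstr_i \, b \IMP a \, \sigstr \, b$ is Monotonicity (Proposition~\ref{monprop}) applied to the decisive singleton $\{ i \}$; and on ballots lying in $\LL_{ab}$ the full equivalence drops out of Proposition~\ref{Udetswprop} together with principality, giving $a \, \sigstr \, b \IFF \{ j \mid a \, \pstr_j \, b \} \in \UU \IFF a \, \pstr_i \, b$.

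The step I expect to be the main obstacle is the behaviour on ballots outside $\LL_{ab}$, namely those on which the dictator $i$ is \emph{indifferent} between $a$ and $b$: there Proposition~\ref{Udetswprop} says nothing, and one must argue separately that the dictator's indifference is transmitted to the social ranking. The natural line of attack is to bring in a third alternative $c$ (available since $\card{\AU} \geq 3$), keep a ballot fixed on $\{ a, b \}$ so that $\IIA$ freezes $\sigma(\cdot) | \{ a, b \}$, and use the decisiveness of $\{ i \}$ on the pairs $\{ a, c \}$ and $\{ b, c \}$ to constrain $\sigma$; but one must take genuine care here, since this is exactly the point at which the weak-order setting diverges from the linear one, the dictator governing only strict comparisons. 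The robustly provable content is the equivalence on linear ballots, which already forces $\sw = \pi_i$ whenever the domain consists of linear orders, so that $\LL_{ab}$ exhausts $\Dinc$ and indifference does not arise.
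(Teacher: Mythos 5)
Your argument is correct as far as it goes, and it is essentially the paper's own proof: the paper's Factorization Theorem is the Ultrafilter Theorem~\ref{ultrafilthm} repackaged through Proposition~\ref{boolalgprop} (ultrafilters on $\Ind$ correspond to boolean algebra homomorphisms $\Two^{\Ind} \lrarr \Two$, and for finite $\Ind$ these are exactly the projections, \ie the principal ultrafilters), while your pairwise determination of $\sw$ via Proposition~\ref{Udetswprop} and Monotonicity~\ref{monprop} is exactly what the commuting square in that theorem expresses. The two proofs have the same mathematical content and differ only in packaging.

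More importantly, the obstacle you flag is real, and the paper does not overcome it either: its Factorization Theorem is stated only for the linear-ballot functor $\DL$, after $\sw$ has been cut down to $\swl : \DLinc \natarrow \Lininc$ via Proposition~\ref{linresprop}, so the paper's ``immediate'' deduction of Theorem~\ref{catarrowthm} likewise establishes $\sw_A(p) = p_i$ only for linear ballots $p \in \DL(A)$ --- exactly the scope of your argument. Your instinct that the indifference case cannot be pushed through is also correct. By $\CUD$(i), $\Dinc$ contains all weak profiles on three-element sets, and a lexicographic (serial) dictatorship --- follow $i$'s strict preferences, and break $i$'s ties by consulting a second fixed individual --- defines a natural transformation $\Dinc \natarrow \Pinc$ (it satisfies $\IIA$) which satisfies $\CP$, yet is not equal to any projection $\pi_k$: it disagrees with $\pi_i$ on any ballot where $i$ is indifferent on a pair but the tie-breaker is not. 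So the theorem as literally stated fails on weak ballots, and should be read as the identity $\swl = \pi_i$ of natural transformations $\DL \natarrow \Lin$; under that reading your proof is complete, and the separate argument you contemplate for the indifference case should not be attempted, since what it would prove is false.
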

More colloquially, this can be stated as:
\begin{center}
\fbox{The only diagonal-preserving natural transformations $\sw : \Dinc \natarrow \Pinc$
are the projections.}
\end{center}


\subsection{An Analogous Result in Type Theory}

We remark that when Arrow's theorem is formulated in this way, it displays an evident kinship with a well-studied genre of results in functional programming and type theory \cite{wadler1989theorems,bainbridge1990functorial}.
These results
use (di)naturality constraints to show that the behaviour of polymorphic terms are essentially determined by their types.

We illustrate these ideas with an example.

\paragraph{Question}
What natural transformations
\[ t_X : X^2 \natarrow X \]
can there be in the functor category $[\Set, \Set]$?

\paragraph{Answer}
The only such natural transformations are the projections.

\paragraph{Sketch}
Use naturality to show this first for two-element sets, then to lift it. E.g.

\[
\begin{diagram}
\{ a, b \}^2 & \rTo^{\pi_1} & \{ a, b \} \\
\dTo & & \dTo \\
X^2 & \rTo_{t_X} & X
\end{diagram}
\qquad \qquad
\begin{diagram}
 (a, b) & \rmto^{\pi_1} & a  \\
  \dmto & & \dmto \\
  (x, y) & \rmto_{t_X} & x 
\end{diagram}
\]

\paragraph{Exercise} Show that the same result holds for natural transformations
$X^{\Ind} \natarrow X$.

\section{Categorical perspective on the proof of Arrow's Theorem}

We shall now revisit the proof of Arrow's theorem from the categorical perspective.

We shall assume throughout this section that we are given a categorical social choice situation  $(\AU, \Ind, \Dom, \sw)$ satisfying $\CP$.
Note firstly that, for each $A \subseteq \AU$, $(A, \Ind, \Dom(A), \sw_A)$ is a standard social choice situation satisfying $\UD$, $\IIA$ and $\Pareto$.
We shall use results from Section~\ref{concsec} freely.

\subsection{Neutrality as Naturality}

The property of Neutrality, which in the concrete setting was stated in a `local' form in Theorem~\ref{neutthm}, becomes a form of naturality.
To state this properly, we need to consider the subfunctor $\DL$ of $\Dinj$, where $\DL(A) = \Dinj(A) \cap \Lin(A)^{\Ind}$. Thus $\DL(A)$ is the set of admissible \emph{linear} ballots. By Proposition~\ref{linresprop}, $\sw$ cuts down to a natural transformation $\swl : \DLinc \natarrow \Lininc$.

The key neutrality property becomes the following:

\begin{proposition}[Neutrality: categorical version]
The social choice map $\swl$ extends to a natural transformation $\swl : \DL \natarrow \Lin$.
\end{proposition}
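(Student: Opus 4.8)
```latex
The plan is to upgrade the already-established natural transformation
$\swl : \DLinc \natarrow \Lininc$ (defined only over the inclusion category
$\Cinc^{\op}$) to a natural transformation over the full injection category
$\CC^{\op}$. The components $\swl_A$ are already given; what must be checked is
the naturality square for an arbitrary injection $f : A \rinj B$ in $\CC$, not
merely for an inclusion. The central observation is that every injection
factors as an isomorphism followed by an inclusion: if $f : A \rinj B$, then $f$
restricts to a bijection $A \to f(A)$, and $f(A) \subseteq B$ is an inclusion.
Since $\PP$, $\DL$ and $\Lin$ are all functors on $\CC^{\op}$ by construction,
naturality with respect to the inclusion part is exactly what
Proposition~\ref{linresprop} already buys us, so the only genuinely new content
is naturality with respect to \emph{isomorphisms} (relabellings of
alternatives).

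First I would reduce the problem to the case of a bijection
$f : A \riso B$ using the factorization above: if the naturality square commutes
for isomorphisms and for inclusions separately, it commutes for their composite,
hence for every injection. Second, for a bijection $f$, I would verify
\[
\swl_A \circ \DL(f) = \Lin(f) \circ \swl_B
\]
directly. This is where the Local Neutrality Theorem~\ref{neutthm} does the
real work: a bijection $f : A \to B$ is precisely a consistent renaming of the
alternatives, and Neutrality says that the social choice map treats all pairs
of distinct alternatives symmetrically, so relabelling the inputs by $f$
relabels the output by $f$ in the same way. Concretely, for $p \in \DL(B)$ and
distinct $a, a' \in A$, one reads off from $\CIIA$ (restriction to pairs) that
$a \,\bigl(\swl_A(\DL(f)(p))\bigr)^{>}\, a'$ holds iff
$f(a)\, \swl_B(p)^{>}\, f(a')$ holds, which is exactly the componentwise
statement of the displayed equation for the strict part; since ballots in
$\DL$ are linear, the strict part determines the whole relation.

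The main obstacle I anticipate is making the passage from Local Neutrality — a
statement about the \emph{decisiveness} relation $D_U$ being invariant under
swapping alternatives — into the required \emph{functorial} invariance under an
arbitrary bijection. Theorem~\ref{neutthm} is phrased for a single transposition
$(a,b) \mapsto (x,y)$ of distinct pairs, whereas a bijection $f$ may permute
all of $A$ at once. The clean way around this is to note that, by
Proposition~\ref{Udetswprop}, $\swl$ is completely determined on linear ballots
by the ultrafilter $\UU$, and $\UU$ is defined purely in terms of the index set
$\Ind$ and makes no reference to the names of the alternatives. Hence the value
$\swl_B(p)$ on a pair $\{f(a), f(a')\}$ depends only on the set
$\{ i \mid f(a) \,p_i^{>}\, f(a')\} = \{ i \mid a\,(\DL(f)(p))_i^{>}\,a'\}$ and
its membership in $\UU$ — and this set is visibly stable under relabelling by
$f$. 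This reduces the whole naturality check to the observation that $\UU$ is
independent of the chosen alternatives, which is immediate from its definition.
```
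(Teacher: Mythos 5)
Your proposal is correct and, at its core, it is the same argument as the paper's: the decisive step in both is Proposition~\ref{Udetswprop}, which shows that each side of the naturality square is determined by whether the set $\{\, i \in \Ind \mid \alpha(a)\, p_i^{>}\, \alpha(a') \,\}$ lies in the ultrafilter $\UU$, a condition insensitive to the names of the alternatives. The only divergence is your preliminary factorization of an injection into a bijection followed by an inclusion, which is harmless but superfluous (and your appeal to Local Neutrality, Theorem~\ref{neutthm}, is likewise a detour): the ultrafilter computation you give for bijections works verbatim for an arbitrary injection $\alpha : A \to B$, which is exactly how the paper proceeds.
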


The assertion of this proposition is that a social welfare function is natural with respect, not just to inclusions, but to injective maps.
Note that the family of maps $\{ \swl_A \}$ is the same: we are claiming that additional naturality squares commute.

\begin{proof}

Let $\alpha : A \rinj B$ be an injective map. We must show that $f = g$, where
\[ f := \swl_A \circ \DL(\alpha), \qquad g := \Lin(\alpha) \circ \swl_B . \]
Note that, for $r \in \Lin(B)$ and $a, a' \in A$,
 \[ a\, (\Lin(\alpha)(r)) \, a' \IFF \alpha(a) \, r \, \alpha(a') , \] 
and  for $p \in \DL(B)$, 
\[ (\DL(\alpha)(p))_i \; = \; \Lin(\alpha)(p_i) . \]
For $p \in \DL(B)$, and distinct $a, a' \in A$, by Proposition~\ref{Udetswprop}:
\[ \begin{array}{lcl}
a \, f(p) \, a' & \IFF & \{ i \in \Ind \mid a \, (\DL(\alpha)(p))_i \, a' \} \in \UU \\
& \IFF  & \{ i \in \Ind \mid \alpha(a) \, p_i \, \alpha(a') \} \in \UU \\
&  \IFF & \alpha(a) \, \swl_B(p) \, \alpha(a') \\
& \IFF & a \, g(p) \, a' . 
\end{array}
\]
\end{proof}

\subsection{The Factorization Theorem}

We now recast the Ultrafilter Theorem into an arrow-theoretic form. Firstly, we recall some standard notions from boolean algebra \cite{sikorski1969boolean}.

\begin{proposition}
\label{boolalgprop}
Given a set $\Ind$, there is a bijection by characteristic functions between families $\UU$ of subsets of $\Ind$, and functions $h : \TI \lrarr \Two$, where $\Two := \{ 0, 1 \}$.
A family $\UU$ is a ultrafilter if and only if the corresponding function $h$ is a boolean algebra homomorphism.
If $\Ind$ is finite, the only such homomorphisms are the projections.
\end{proposition}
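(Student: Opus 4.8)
The plan is to prove Proposition~\ref{boolalgprop} in three stages corresponding to its three sentences, treating the first as essentially definitional bookkeeping and concentrating effort on the algebraic characterization and the finiteness argument.

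First I would establish the bijection. A family $\UU \subseteq \pow(\Ind)$ is the same data as its indicator function $h : \TI \to \Two$, defined by $h(U) = 1$ iff $U \in \UU$, where I am freely identifying subsets of $\Ind$ with points of $\TI = \Two^{\Ind}$ via characteristic functions. This correspondence is manifestly a bijection, so there is nothing substantive to prove here beyond fixing the identification.

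Next I would prove the central equivalence: $\UU$ is an ultrafilter iff $h$ is a boolean algebra homomorphism. The natural route is to match the ultrafilter axioms (F1), (F2), (F5), (F6), (F7) as recorded in Theorem~\ref{ultrafilthm} against the homomorphism conditions $h(\Ind)=1$, $h(\vn)=0$, $h(U \cap V) = h(U) \wedge h(V)$, and $h(U^c) = \neg\, h(U)$, where the boolean operations on the codomain $\Two$ are the evident ones. For the forward direction I would show that (F1) and (F5) give the unit conditions, (F6) together with upward closure (F2) gives multiplicativity of $h$ on meets, and (F7) combined with (F5) forces $h$ to respect complementation. For the converse, each homomorphism law is read back as the corresponding closure property of $\UU = h^{-1}(1)$; preservation of joins then follows from meets and complements by de Morgan, so one checks that an ultrafilter and a homomorphism carry exactly the same information.

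Finally I would treat the finiteness claim, which I expect to be the main obstacle since it is the one genuinely non-formal step. When $\Ind$ is finite, a boolean homomorphism $h : \TI \to \Two$ must send some atom to $1$: writing $\Ind$ as the disjoint union of its singletons, $h(\Ind) = 1$ forces, by an induction on $\card{\Ind}$ using multiplicativity on meets and the two-valuedness of $\Two$, that $h(\{i\}) = 1$ for exactly one $i$. Then for an arbitrary $U$, multiplicativity gives $h(U) = h(\{i\}) \wedge h(U) = h(\{i\} \cap U)$, which is $1$ precisely when $i \in U$; hence $h$ is evaluation at $i$, i.e. the $i$-th projection $\pi_i : \TI \to \Two$. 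The delicate point is the ``exactly one atom'' step: distinctness of atoms forces $h$ to vanish on all but one singleton (two atoms mapping to $1$ would have meet $\vn$ mapping to $1$, contradicting $h(\vn)=0$), and existence of at least one uses finiteness to write $1 = h(\Ind) = \bigvee_i h(\{i\})$ and invoke that a finite join in $\Two$ is $1$ only if some summand is. This pins down $\UU$ as the principal ultrafilter of supersets of $\{i\}$, completing the proof.
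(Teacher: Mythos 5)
Your proof is correct. Note that the paper itself offers no proof of this proposition: it is presented as a recollection of standard facts from Boolean algebra, with a citation to Sikorski, so there is no paper argument to diverge from, and what you have written is precisely the standard argument that citation points to — the two-level characteristic-function bijection, the axiom-by-axiom matching of (F1), (F2), (F5)--(F7) against preservation of top, bottom, meets and complements, and the atom argument (exactly one singleton maps to $1$, by finite-join preservation plus $h(\vn)=0$) showing every homomorphism is a projection when $\Ind$ is finite. The only sketch-level imprecision is that complement-preservation needs (F6) as well as (F7) and (F5), to rule out $U$ and $U^c$ both lying in $\UU$; but your finiteness step applies exactly this meet-equals-$\vn$ argument to two distinct singletons, so the idea is clearly in hand.
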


We can define maps
\[ \phi : \Lin(A) \lrarr \Two^{A^2} \; :: \; \phi(r)(a, a') = 1 \IFF a \, r^{>} \, a' \]
and
\[ \psi : \DL(S) \lrarr (\TI)^{A^2} \; :: \; \psi(p)(a, a')_i = 1 \IFF a \, p_i^{>} \, a' .\]
The following is a restatement in arrow-theoretic terms of the Ultrafilter Theorem.

\begin{theorem}[Factorization Theorem]
For any social choice function $\sw : \DL \natarrow \Lin$, there is a boolean algebra homomorphism
\[ h : 2^{\Ind} \lrarr 2 \]
such that the following diagram commutes:
\[ \begin{diagram}[4em]
\DL(A) & \rTo^{\sigma_A} & \Lin(A) \\
\dTo^{\psi} & & \dTo_{\phi} \\
 (\TI)^{A^2} & \rTo_{h^{A^2}} & \Two^{A^2}
\end{diagram} \]
\end{theorem}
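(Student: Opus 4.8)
The plan is to recognise the square as Proposition~\ref{Udetswprop} --- the fact that the ultrafilter $\UU$ determines $\sw$ on linear ballots --- rewritten through the encodings $\phi$ and $\psi$. Thus the homomorphism $h$ will be read off directly from $\UU$, and the substance of the diagram will already be in hand; what remains is an entrywise check together with a single coherence point.

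First I would produce $h$. By the Ultrafilter Theorem~\ref{ultrafilthm}, $\UU$ is an ultrafilter on $\Ind$, so Proposition~\ref{boolalgprop} turns it into a boolean algebra homomorphism $h : \TI \lrarr \Two$, namely the one determined by $h(\chi_U) = 1 \IFF U \in \UU$, where $\chi_U \in \TI$ is the characteristic vector of $U \subseteq \Ind$. This is the $h$ asserted by the theorem.

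Next I would reduce commutativity to one coordinate. Both composites land in $\Two^{A^2}$, so it suffices to fix $p \in \DL(A)$ and a pair $(a, a') \in A^2$ and to match the two values there. Unwinding the definitions, the value along $h^{A^2} \circ \psi$ is $h(\psi(p)(a, a'))$, and $\psi(p)(a, a')$ is precisely $\chi_U$ for $U = \{ i \in \Ind \mid a \, p_i^{>} \, a' \}$, so this value is $1$ iff $U \in \UU$. The value along $\phi \circ \sw_A$ is $\phi(\sw_A(p))(a, a')$, which is $1$ iff $a \, \sw_A(p)^{>} \, a'$. Hence the square commutes exactly when
\[ a \, \sw_A(p)^{>} \, a' \IFF \{ i \in \Ind \mid a \, p_i^{>} \, a' \} \in \UU \]
for all such $p$ and pairs. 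The degenerate case $a = a'$ is immediate, since strict preference is irreflexive and $\vn \notin \UU$, so both sides vanish.

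For distinct $a, a'$ this is precisely Proposition~\ref{Udetswprop}, and the one delicate point --- the main obstacle --- is that it must be applied with the \emph{same} global ultrafilter $\UU$ for every set of alternatives $A$. This uniformity is exactly what the categorical form of Neutrality secures: because $\sw$ is natural with respect to injective maps, a computation over an arbitrary $A$ may be transported to a fixed situation of size $\geq 3$. Concretely, picking a third alternative $c$ (available since $\card{\AU} \geq 3$), I would use part (i) of $\CUD$ to realise an admissible linear ballot $p'$ on $\{ a, a', c \}$ with $p' \res \{ a, a' \} = p \res \{ a, a' \}$, apply naturality (\ie $\IIA$) to obtain $a \, \sw_A(p)^{>} \, a' \IFF a \, \sw_{\{ a, a', c \}}(p')^{>} \, a'$, and then invoke Proposition~\ref{Udetswprop} in the three-element situation, where $\{ i \in \Ind \mid a \, {p'}_i^{>} \, a' \} = \{ i \in \Ind \mid a \, p_i^{>} \, a' \}$. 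That the decisive-set ultrafilter computed there coincides with the global $\UU$ is again a consequence of Neutrality; pinning down this coherence is where the real work lies, the remaining manipulations being routine definition-chasing.
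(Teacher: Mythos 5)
Your proposal is correct and follows essentially the paper's own route: the paper gives no separate proof at all, presenting the result as ``a restatement in arrow-theoretic terms of the Ultrafilter Theorem,'' i.e.\ exactly your argument---$h$ is obtained from the ultrafilter $\UU$ of Theorem~\ref{ultrafilthm} via Proposition~\ref{boolalgprop}, and commutativity of the square, checked entrywise (including your observation about the diagonal entries $a = a'$), is precisely Proposition~\ref{Udetswprop} read through the encodings $\phi$ and $\psi$. The coherence point you isolate---that one global $\UU$ must serve every object $A$, whereas Proposition~\ref{Udetswprop} is proved for a single fixed social choice situation---is genuine but is silently assumed by the paper (which likewise uses a single $\UU$ at two different objects in its proof of categorical Neutrality), and your transport through $\{a, a', c\}$ is the right way to discharge it, provided you carry it out using only inclusion-naturality (the $\IIA$ hypothesis), $\CUD$, local Neutrality (Theorem~\ref{neutthm}) and maximality of ultrafilters, rather than the categorical Neutrality proposition itself, whose proof in the paper already presupposes exactly this coherence.
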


The content of this result is that all the information needed to compute the social welfare function $\sw$ is contained in the boolean algebra homomomorphism $h$.

The categorical form of Arrow's theorem, Theorem~\ref{catarrowthm}, follows immediately from the Factorization Theorem and the last remark in Proposition~\ref{boolalgprop}.

\section{Discussion}

One of our motivations in undertaking this study of Arrow's theorem was to see if common structure could be identified with notions such as no-signalling, parameter independence etc. which play a key r\^ole in quantum foundations. Arrow's theorem is a no-go theorem of a similar flavour to results such as Bell's theorem.
A central assumption is $\IIA$, which is analogous to the various forms of independence which appear as hypotheses of the results in quantum foundations.
In particular, the functorial treatment we have developed in the present paper has common features with the r\^ole of presheaves in the sheaf-theoretic account of quantum non-locality and contextuality given in \cite{abramsky2011sheaf}.

It must be said that, although some degree of commonality has been exposed by the present account, the arguments are substantially different. Nevertheless, the use of the categorical language to put results from such different settings in a common framework is suggestive, and may prove fruitful in exploring the r\^ole of various forms of independence.
It will also be interesting to relate this to the logics of dependence and independence being developed by Jouko V\"a\"an\"anen and his colleagues.

Altogether, although modest in its scope, we hope the present paper may help to suggest some further possibilities for elucidating the general structure of no-go results, and of the notions of independence which play a pervasive part in these results.

\bibliographystyle{alpha}
\bibliography{bibfile}

\end{document}